\newtheorem{defin}{Definition}
\newtheorem{theorem}{Theorem}
\newcommand{\p}[1]{\left(#1\right)}
\newcommand{\quot}[1]{``#1''}
\newcommand{\namehead}[3]{
\lstset{breaklines=true, morecomment=[l]{//}, frame=single, showstringspaces=false, numbers=left}
\begin{flushright}
Nathan Fox\\
#2\\
#3\\
\end{flushright}
\ifstrequal{#1}{.}{}{
\begin{center}
{\Large Homework #1}
\end{center}}
}
\newcommand{\seq}{\p}
\newcommand{\Mf}{M}
\newcommand{\Rh}{R}
\begin{document}
%
%\lstset{breaklines=true, morecomment=[l]{//}, frame=single, showstringspaces=false, numbers=left}
%
%\begin{flushright}
%Nathan Fox\\
%Math 640\\
%\end{flushright}
%\begin{center}
%\LARGE{On Aperiodic Subtraction Games with Bounded Nim Sequence}
%\end{center}
%\namehead{Math 640}{\LARGE{On Aperiodic Subtraction Games with Bounded Nim Sequence}}
\title{Linear Recurrent Subsequences of Meta-Fibonacci Sequences}
\author{Nathan Fox\footnote{Department of Mathematics, Rutgers University, Piscataway, New Jersey,
\texttt{fox@math.rutgers.edu}
}}
\date{}

\maketitle

\begin{abstract}
In a recent paper, Frank Ruskey asked whether every linear recurrent sequence can occur in some solution of a meta-Fibonacci sequence.  In this paper, we answer his question in the affirmative for recurrences with positive coefficients.%superlinear recurrences.
\end{abstract}

\section{Introduction}
Various authors use various definitions of meta-Fibonacci sequences, which were first introduced by Douglas Hofstadter~\cite[pp.\ 137-138]{geb}.  In this paper, we will use the following definition.
\begin{defin}
A \emph{meta-Fibonacci sequence} is a sequence of integers
$\seq{q_n}_{n\geq n_0}$
%$\seq{\Mf\p{n}}$
that eventually satisfy a recurrence of the form
\[
\Mf\p{n}=\sum_{i=1}^kb_i\Mf\p{n-\Mf\p{n-i}}
\]
for some fixed positive integers $k$ and $b_i$.  (Typically, $n_0$ is $0$ or $1$.)
\end{defin}
Other authors allow some variations of this definition~\cite{tanny}, but all would agree that any sequence described this way is in fact meta-Fibonacci.

%In this paper, we will be concerned with \emph{quasi-periodic} solutions to meta-Fibonacci sequences.  To this effect, we will use the following definition.
%\begin{defin}
%A sequence $\seq{a_n}_{n\geq1}$ is \emph{quasi-periodic} with \emph{quasi-period} $s$ if for each integer $0\leq r<s$, the subsequence $\seq{a_{sn+r}}_{n\geq1}$ satisfies a linear recurrence with constant coefficients.
%\end{defin}
%In particular, arithmetic progressions satisfy linear recurrences with constant coefficients.  In the case where each subsequence is an arithmetic progression, the sequence is called \emph{quasilinear}.

The original and most well-known meta-Fibonacci sequence is Hofstadter's $Q$-sequence.  His sequence (with $n_0=1$) is defined by $Q\p{1}=1$, $Q\p{2}=1$, and for $n\geq3$, $Q\p{n}=Q\p{n-Q\p{n-1}}+Q\p{n-Q\p{n-2}}$~\cite{geb}.  A primary question asked about meta-Fibonacci sequences (and famously still open for Hofstadter's $Q$-sequence) is whether they are defined for all $n$.  In general, it is conceivable that one of two undesirable things could happen when trying to evaluate $\Mf\p{n}$ for a meta-Fibonacci sequence:
\begin{itemize}
\item When trying to evaluate $\Mf\p{n}$, $\Mf\p{m}$ needs to be evaluated for some $m<n_0$.  (For example, this would happen when evaluating $Q\p{3}$ if the initial conditions to Hofstadter's recurrence were $Q\p{1}=1$, $Q\p{2}=3$.)
\item When trying to evaluate $\Mf\p{n}$, $\Mf\p{m}$ needs to be evaluated for some $m\geq n$.  (For example, this would happen when evaluating $Q\p{3}$ if the initial conditions to Hofstadter's recurrence were $Q\p{1}=1$, $Q\p{2}=0$.)
\end{itemize}
Often, authors use the convention that, when dealing with meta-Fibonacci sequences, $\Mf\p{n}=0$ whenever $n<n_0$.  We will use this convention going forward.  This causes the first potential issue to go away, though the second one remains nonrecoverable.

Hofstadter's original sequence appears to behave quite chaotically.  But, related sequences have been found with much more predictable behavior.  For example, Tanny was able to slightly modify the recurrence to yield a better-behaved but similar-looking sequence~\cite{tanny}.  Also, Golomb discovered that, under Hofstadter's original recurrence, changing the initial condition to $Q\p{1}=3$, $Q\p{2}=2$, and $Q\p{3}=1$ yields a quasilinear solution of period three~\cite{golomb}.  In a similar vein, Ruskey discovered (using the convention that evaluating at a negative index gives zero) a 
%quasiperiodic
solution to Hofstadter's $Q$-recurrence that includes the Fibonacci numbers starting from $5$ as every third term~\cite{rusk}.  At the end of his paper, Ruskey asks whether every linear recurrent sequence exists as an equally-spaced subsequence of a solution to some meta-Fibonacci recurrence.  In this paper, we answer this question positively for recurrences with positive coefficients.  %In particular, our proof is constructive, so our result is essential an algorithm for generating a meta-Fibonacci sequence with a given linear recurrent quasiperiodic solution.
%In particular, we give a construction for generating a meta-Fibonacci sequence with a given linear recurrent quasiperiodic solution.
In particular, our proof is constructive.
%All sequences are indexed from $1$ (not that it matters).  Define what a meta-Fibonacci sequence is.  Talk about Hofstadter, and talk about convention that less than or equal to $0$ is zero.  Talk Golomb.  Cite Ruskey~\cite{rusk}.  When do things die?
%TODO maybe settle the linear case also?  What about quasilinear or linear/nonlinear quasiperiodic?
\section{The Construction}
We will prove this main theorem:
\begin{theorem}\label{thm:main}
Let $\seq{a_n}_{n\geq0}$ be a sequence of positive integers satisfying the recurrence
\[
a_n=\sum_{i=1}^kb_ia_{n-i},
\]
for some positive integer $k$ and nonnegative integers $b_1,b_2,\ldots,b_k$ whose sum is at least $2$.  Then, there is a 
%quasi-periodic
meta-Fibonacci sequence $\seq{q_n}_{n\geq0}$
and a positive integer $s$ such that $q_{sn}=a_n$ for all $n\geq0$.  (We will call the number $s$ the \emph{quasi-period} of the sequence $\seq{q_n}$.)
%(In other words, the sequence $\seq{q_n}_{n\geq1}$ is quasi-periodic with quasi-period $s$, and $\seq{a_n}_{n\geq1}$ appears as one of its equally-spaced subsequences.)
%such that the sequence $\seq{a_n}$ occurs as .
\end{theorem}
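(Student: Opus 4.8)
The plan is to construct the meta-Fibonacci sequence $\seq{q_n}$ explicitly by interleaving the target sequence $\seq{a_n}$ with carefully chosen ``filler'' values on the non-multiples of $s$, so that the meta-Fibonacci recurrence $q_n = \sum_i b_i q_{n - q_{n-i}}$ holds everywhere. The key insight is that the recurrence references the sequence at indices $n - q_{n-i}$; if I can arrange, for the indices $n = sm$ where the value $a_m$ must appear, that each term $q_{n-i}$ (for $i = 1, \dots, k$) equals something of the form $s \cdot (\text{offset})$ or is otherwise controllable, then $n - q_{n-i}$ lands on a multiple of $s$ whose value is a previous term of $\seq{a_m}$, and the meta-Fibonacci sum collapses to exactly $\sum_i b_i a_{m-i} = a_m$. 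So the first step is to decide what $s$ should be (likely something like $k$ or $k+1$, or a quantity built from $k$ and the $b_i$) and to define the filler values at non-multiples of $s$ — a natural first guess is to make them small constants, or slowly-growing values, chosen so that the back-references from filler positions also stay consistent.

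Next I would set up an induction on $n$: assume $q_0, \dots, q_{n-1}$ have been defined and that they satisfy the meta-Fibonacci recurrence at all indices $< n$ where it applies, and that $q_{sm} = a_m$ for all $sm < n$. Then I split into cases according to $n \bmod s$. In the case $s \mid n$, say $n = sm$, I compute $n - q_{n-i}$ for each $i$ using the (already established) values of $q_{n-1}, \dots, q_{n-k}$; the arithmetic should show these indices are $s(m-1), s(m-2), \dots, s(m-k)$ (plus possibly contributions weighted correctly by the $b_i$), so that $q_n = \sum_i b_i q_{s(m-i)} = \sum_i b_i a_{m-i} = a_m$, using the initial conditions of $\seq{a_n}$ to handle small $m$. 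In the remaining cases ($s \nmid n$), I verify that the recurrence, applied to the filler values, reproduces the intended filler value — this is where the choice of fillers must be made compatibly with the choice of $s$ and with the growth of $\seq{a_n}$. I also need to confirm that the second ``nonrecoverable'' pathology from the introduction never occurs, i.e.\ that $n - q_{n-i} < n$ always (equivalently $q_{n-i} \geq 1$, so $\seq{q_n}$ must be strictly positive from some point on), and that negative indices, which evaluate to $0$ by convention, are handled — for instance by absorbing them into the base cases of $\seq{a_n}$ or by a finite amount of hand-tuning of the initial terms $q_0, q_1, \dots$.

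The main obstacle I expect is the interaction between the back-references at the filler positions and the requirement that the offsets at the special positions $n = sm$ come out to exact multiples of $s$. Because $q_{n-i}$ appears \emph{inside} the index, a filler value that is too large can cause $n - q_{n-i}$ to jump past a multiple of $s$ (landing on another filler, or even going negative), while a filler value that is too small may fail to satisfy its own recurrence. Since $\seq{a_n}$ grows (its coefficients sum to at least $2$, forcing exponential-type growth unless it is eventually constant — and the hypothesis $\sum b_i \geq 2$ is presumably exactly what rules out the degenerate constant case), the terms $q_{s(m-i)} = a_{m-i}$ used in the index computation are themselves growing, so the ``target'' position $n - q_{n-i}$ is being pulled far back; making this land precisely on $s(m-i)$ will likely require the filler values between $s(m-i)$ and $s(m-i+1)$ to be chosen as an explicit function of $a_{m-i}$ and the local index, and checking that this same assignment is consistent when those filler positions are themselves evaluated by the recurrence is the crux of the argument. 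I would therefore spend most of the effort pinning down the filler formula and then verifying, case by case on $n \bmod s$, that one uniform definition works for all sufficiently large $n$, patching the finitely many small indices by direct computation.
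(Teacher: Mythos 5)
Your overall shape (interleave $\seq{a_n}$ at multiples of $s$ with fillers, induct on $n$, case on $n \bmod s$, exploit the ``negative index gives $0$'' convention) matches the paper, but the central mechanism you propose is not the one that works, and the fix you anticipate points in the wrong direction. You plan to make $n - q_{n-i}$ land on $s(m-i)$ so that the sum telescopes to $\sum_i b_i a_{m-i} = a_m$, and you correctly observe that this collides with the growth of $\seq{a_n}$: you then guess the fillers must be explicit functions of $a_{m-i}$. In the paper's construction the fillers are \emph{constants}. The quasi-period is $s = 2k$; the odd positions $2mk+2j+1$ carry the constant $2k(k-j)$, a multiple of $2k$, so that a back-reference offset by a filler value stays in the \emph{same} residue class mod $2k$ (it does not try to hop to the residue class holding $a_{m-i}$). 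The large values $a_m$ sitting at even positions are never used as productive offsets at all: whenever one of them appears inside the recurrence as $q_{n-\cdot}$, the growth condition guarantees $n - q_{n-\cdot} < 0$, and that term contributes $0$. Growth is thus an ally, not an obstacle. Two further ideas are needed that your plan omits. First, the theorem only asks that $\seq{q_n}$ satisfy \emph{some} meta-Fibonacci recurrence, so the paper is free to use $M(n) = M(n - M(n-2)) + \sum_{i=1}^k b_i M(n - M(n-(2i-1)))$: the inner offsets $2i-1$ make every $b_i$-term reference an odd (constant-filler) position when $n$ is even, and the extra unit-coefficient term is what sustains the fillers themselves when $n$ is odd (there, all $b_i$-terms reference huge even values and die). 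You instead assume the recurrence $q_n = \sum_i b_i q_{n - q_{n-i}}$ with the original spacing, under which the consistency problem at filler positions that you flag really would bite.

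Second, and most importantly, you miss that the construction forces \emph{rotations} of the recurrence into the picture. At an even position $n = 2mk+2j$, the surviving back-references land at $2(m-k+j-i)k + 2j$ for $i \le j$ and $2(m-i+j)k+2j$ for $i > j$ --- same residue $2j$, but with the index shifts $k+i-j$ and $i-j$ rather than $i$. Hence the value at residue class $2j$ is forced to satisfy the recurrence with coefficients cyclically rotated by $j$, so only residue $0$ can carry $\seq{a_n}$ itself; the other even classes must carry the $k-1$ rotated companion sequences $a^{(j)}$, each seeded with the same initial values. Without this (or an equivalent device) the induction at even positions cannot close, because the sum you compute at residue $2j$ is $\sum_i b_i$ times the \emph{wrong} earlier terms of whatever lives in that class. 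Since your proposal leaves exactly these points (``pinning down the filler formula'' and making the filler positions self-consistent) as the part still to be worked out, and your stated guesses for how to resolve them ($s \approx k$ or $k+1$, fillers depending on $a_{m-i}$, keeping the original coefficient spacing) would not lead to the working construction, the gap is genuine rather than merely a matter of detail.
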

%In all cases, we will use meta-Fibonacci sequences from the following family:
%\begin{defin}\label{def:qr}
%For $r\geq1$, define the recurrence
%\[
%Q_r\p{n}=Q_r\p{n-Q_r\p{n-2}}+\sum_{i=1}^rQ_r\p{n-Q_r\p{n-\p{2i-1}}}
%\]
%\end{defin}
%Notice that $Q_1$ is Hofstadter's original $Q$ recurrence.  Also, notice that the recurrence for $Q_r$ has $r+1$ terms in it.

%In particular, we will prove the following slight strengthening of Theorem~\ref{thm:main}.
%\begin{theorem}\label{thm:tech}
\begin{proof}
Let $\seq{a_n}_{n\geq0}$ be a sequence of positive integers satisfying the recurrence
\[
a_n=\sum_{i=1}^kb_ia_{n-i},
\]
for some positive integer $k$ and nonnegative integers $b_1,b_2,\ldots,b_k$ whose sum is at least $2$.  For each $r$ from $0$ to $k-1$, define the sequence $\seq{a^{\p{r}}_n}_{n\geq1}$ as
\[
%a^{\p{r}}_n=\sum_{i=1}^kb_{\p{\p{i-r}\modd k}}a^{\p{r}}_{n-i}
%a^{\p{r}}_n=\sum_{i=1}^rb_{k+i-r}a^{\p{r}}_{n-i}+\sum_{i=r+1}^kb_{i-r}a^{\p{r}}_{n-i}
a^{\p{r}}_n=\sum_{i=1}^rb_ia^{\p{r}}_{n-k-i+r}+\sum_{i=r+1}^kb_ia^{\p{r}}_{n-i+r}
\]
with $a^{\p{r}}_i=a_i$ for $i\leq k$.  Notice that $a^{\p{0}}_n=a_n$, and the other sequences satisfy similar recurrences with the coefficients cycled.
Since
the coefficients of the recurrences are nonnegative and sum to at least $2$, the sequences $\seq{a^{\p{r}}_n}_{n\geq1}$ exhibit superlinear growth for all $r$.

Now, define the sequence $\seq{q_n}_{n\geq1}$ as follows:
\[
\begin{cases}
q_{2mk+2j}=a^{\p{j}}_m & 0\leq j<k\\
q_{2mk+2j+1}=2k\p{k-j} & 0\leq j<k,
\end{cases}
\]
We claim that $\seq{q_n}_{n\geq1}$ eventually satisfies the meta-Fibonacci recurrence
\[
\Mf_a\p{n}=\Mf_a\p{n-\Mf_a\p{n-2}}+\sum_{i=1}^kb_i\Mf_a\p{n-\Mf_a\p{n-\p{2i-1}}}.
\]
%\end{theorem}
%Notice that Theorem~\ref{thm:tech} implies Theorem~\ref{thm:main}, since
Notice that this will imply the desired result, since
%we can take $s$
%in Theorem~\ref{thm:main}
%to be equal to $2k$
the quasi-period will be $2k$.
Let $h$ be an integer satisfying all of the following constraints:
\begin{itemize}
\item $h\geq2k-1$
\item $h\geq2$
%\item $h+1$ is divisible by $k$
\item For all $r$, whenever $m\geq h$, $a^{\p{r}}_{m-1}\geq 2\p{m+1}k$.
\end{itemize}
%For each integer $h\geq2k-1$ (or $2$ if $k=1$) we will define a function $R_h\p{n}$ as follows:
We define a function $\Rh$ as follows:
\[
\Rh\p{n}=\begin{cases}
q_n & n\leq h\\
\Rh\p{n}=\displaystyle{\Rh\p{n-\Rh\p{n-2}}+\sum_{i=1}^kb_i\Rh\p{n-\Rh\p{n-\p{2i-1}}}} & n>h.
\end{cases}
\]
In other words, $\Rh$ eventually satisfies the recurrence $\Mf_a$, and it has an initial condition of length $h$ that matches $\seq{q_n}$.  The first two conditions on $h$ are required to make $\Rh$ well-defined.  Since all the linear recurrent sequences under consideration grow superlinearly, the third condition will be satisfied by all sufficiently large numbers.  Hence, such an $h$ exists, and all larger values would also be valid choices for $h$.

We wish to show that
%, for sufficiently large $h$,
$R\p{n}=q_n$ for all $n$.
We will proceed by induction on $n$.
%Since we allow ourselves an arbitrarily long initial condition
The base case is covered by the fact that
%we allow ourselves an arbitrarily long initial condition.
$R\p{n}$ is defined to equal $q_n$ for $n\leq h$.
So, we will show that, for $n>h$, $R\p{n}=q_n$ under the assumption that $R\p{p}=q_p$ for all $1\leq p<n$.
For this, we will split into two cases:
\begin{description}
\item[$n$ is odd:] %We wish to show that $Q_a\p{n}=q_n$ eventually.  
Since $n$ is odd, it is of the form $2mk+2j+1$ for some $m\geq0$ and some $0\leq j<k$.
%Furthermore, in the following argument, it will be convenient to define $f\p{-1}$ to exist and equal $f\p{r-1}$.
By our choice of $h$, $a^{\p{r}}_{m-1}\geq 2\p{m+1}k$ and $a^{\p{r}}_{m}\geq 2\p{m+2}k$.  In particular, both of these are greater than $2mk+2j+1$.  Using this fact, we have
%We have, once $a^{\p{r}}_{m-1}\geq 2mr+2j+1$ and $a^{\p{r}}_m\geq 2mr+2j+1$ for all $r$ (which will eventually be the case since these sequences are superlinear),
{\allowdisplaybreaks\begin{align*}
\Rh\p{n}&=\Rh\p{n-\Rh\p{n-2}}+\sum_{i=1}^kb_i\Rh\p{n-\Rh\p{n-\p{2i-1}}}\\
&=\Rh\p{n-q_{n-2}}+\sum_{i=1}^kb_i\Rh\p{n-q_{n-\p{2i-1}}}\\
&=\Rh\p{2mk+2j+1-q_{2mk+2j-1}}+\sum_{i=1}^kb_i\Rh\p{2mk+2j+1-q_{2mk+2j+1-\p{2i-1}}}\\
&=\Rh\p{2mk+2j+1-q_{2mk+2\p{j-1}+1}}+\sum_{i=1}^kb_i\Rh\p{2mk+2j+1-q_{2mk+2\p{j-i+1}}}\\
&=\Rh\p{2mk+2j+1-q_{2mk+2\p{j-1}+1}}+\sum_{i=1}^{j+1}b_i\Rh\p{2mk+2j+1-a^{\p{j-i+1}}_m}\\
&\hspace{0.2in}+\sum_{i=j+2}^kb_i\Rh\p{2mk+2j+1-a^{\p{k+j-i+1}}_{m-1}}\\
%&=\Rh\p{2mk+2j+1-2k\p{k-j}}+\sum_{i=1}^kb_i\Rh\p{2mk+2j+1-a^{\p{\p{j-i+1}\modd k}}_m}\\
&=\Rh\p{2\p{m-k+j}k+2j+1}+\sum_{i=1}^{j+1}b_i\cdot0+\sum_{i=j+2}^kb_i\cdot0\\
&=q_{2\p{m-k+j}k+2j+1}\\
%&=2\p{\p{m-k+j}\modd k}\p{k-j}\\
&=2k\p{k-j}\\
&=q_{2mk+2j+1}\\
&=q_n,
\end{align*}}
as required.
\item[$n$ is even:] %We wish to show that $\Rh\p{n}=q_n$ eventually.  
Since $n$ is even, it is of the form $2mk+2j$ for some $m\geq0$ and some $0\leq j< k$.
%Furthermore, in the following argument, it will be convenient to define $f\p{-\ell}$ to exist and equal $f\p{r-\ell}$.
%We have, once $a^{\p{r}}_{m-1}\geq 2mr+2j$ and $a^{\p{r}}_m\geq 2mr+2j$ for all $r$ (which will eventually be the case since these sequences are superlinear),
By our choice of $h$, $a^{\p{r}}_{m-1}\geq 2\p{m+1}k$ and $a^{\p{r}}_{m}\geq 2\p{m+2}k$.  In particular, both of these are greater than $2mk+2j$.  Using this fact, we have
{\allowdisplaybreaks
\begin{align*}
\Rh\p{n}&=\Rh\p{n-\Rh\p{n-2}}+\sum_{i=1}^kb_i\Rh\p{n-\Rh\p{n-\p{2i-1}}}\\
&=\Rh\p{n-q_{n-2}}+\sum_{i=1}^kb_i\Rh\p{n-q_{n-\p{2i-1}}}\\
&=\Rh\p{2mk+2j-q_{2mk+2j-2}}+\sum_{i=1}^kb_i\Rh\p{2mk+2j-q_{2mk+2j-\p{2i-1}}}\\
&=\Rh\p{2mk+2j-q_{2mk+2\p{j-1}}}+\sum_{i=1}^kb_i\Rh\p{2mk+2j-q_{2mk+2\p{j-i}+1}}%\\
\end{align*}}
If $j=0$, then we will have $q_{2mk+2\p{j-1}}=a^{\p{k-1}}_{m-1}$.  Otherwise, we will have $q_{2mk+2\p{j-1}}=a^{\p{j-1}}_m$.  In either case,
%for sufficiently large $m$
we have $\Rh\p{2mk+2j-q_{2mk+2\p{j-1}}}=0$.  So,
%&=\Rh\p{2mk+2j+1-a^{\p{\p{j-1}\modd k}}_m}+\sum_{i=1}^kb_i\Rh\p{2mk+2j-2k\p{k-\p{\p{j-i}\modd k}}}\\
{\allowdisplaybreaks
\begin{align*}
\Rh\p{n}&=0+\sum_{i=1}^kb_i\Rh\p{2mk+2j+1-q_{2mk+2\p{j-i}+1}}\\
&=\sum_{i=1}^jb_i\Rh\p{2mk+2j-2k\p{k-\p{j-i}}}\\
&\hspace{0.2in}+\sum_{i=j+1}^kb_i\Rh\p{2mk+2j-2k\p{k-\p{k+j-i}}}\\
&=\sum_{i=1}^jb_i\Rh\p{2mk+2j-2k\p{k-j+i}}+\sum_{i=j+1}^kb_i\Rh\p{2mk+2j-2k\p{i-j}}\\
&=\sum_{i=1}^jb_i\Rh\p{2\p{m-k-i+j}k+2j}+\sum_{i=j+1}^kb_i\Rh\p{2\p{m-i+j}k+2j}\\
&=\sum_{i=1}^jb_iq_{2\p{m-k-i+j}k+2j}+\sum_{i=j+1}^kb_iq_{2\p{m-i+j}k+2j}\\
%&=\sum_{i=1}^kb_iq_{2\p{m-\p{\p{j-i}\modd k}}k+2j}\\
&=\sum_{i=1}^jb_ia^{\p{j}}_{m-k-i+j}+\sum_{i=j+1}^kb_ia^{\p{j}}_{m-i+j}\\
%&=\sum_{i=1}^kb_ia^{\p{j}}_{m-\p{\p{j-i}\modd k}}\\
%&=\sum_{i=1}^kb_{\p{\p{i+j}\modd k}}a^{\p{j}}_{m-i}\\
&=a^{\p{j}}_m\\
&=q_{2mk+2j}\\
&=q_n,
\end{align*}}
as required.
\end{description}
\end{proof}
\subsection{Notes about the Construction}
In the case where $\seq{a_n}_{n\geq0}$ is the Fibonacci sequence starting from $5$, this construction does not give Ruskey's sequence.  Rather, we obtain the sequence 
\[
%\p{8,5,4,5,8,8,4,8,8,13,4,13,8,21,4,21,\ldots}
\p{5,8,5,4,8,8,8,4,13,8,13,4,21,8,21,4,\ldots}
\]
that eventually satisfies the recurrence
\[
\Mf_a\p{n}=\Mf_a\p{n-\Mf_a\p{n-1}}+\Mf_a\p{n-\Mf_a\p{n-2}}+\Mf_a\p{n-\Mf_a\p{n-3}}.
\]
The Fibonacci numbers each appear twice in this sequence because the Fibonacci recurrence is invariant under rotation (and each rotation of it appears once).

Since any linear recurrent sequence satisfies infinitely many linear recurrences, this construction actually gives infinitely many meta-Fibonacci sequences including a given linear-recurrent sequence.  In addition, the construction can be tweaked in a number of ways to yield slightly different sequences.  For example, one could start from a rotation of the desired sequence.  Or, the initial conditions for the rotations could be chosen differently, since their values are not critical to the construction.  (We only care about the growth rate and recurrent behavior of the rotations.)  But, none of these modifications would suffice to cause our construction to yield Ruskey's sequence, since his sequence has quasi-period $3$ and our construction only yields sequences with even quasi-periods.  This fact seems to indicate that there are many more meta-Fibonacci sequences for a given linear recurrent sequence than our construction can generate.

In our construction, we put two constraints on the $b$ values.  First, we require them to be nonnegative.  With our conventions, it would be impossible to have a solution to a meta-Fibonacci recurrence with infinitely many nonpositive entries.  There are many linear recurrent sequences with positive terms but some negative coefficients.  But, our construction fails for these sequences, since some rotation of such a sequence will have infinitely many nonpositive entries.  Ruskey's question remains open for such sequences.  Second, we require the sum of the $b$ values to be at least $2$.  This was necessary to force the terms of $\seq{a_n}$ to grow superlinearly.  If the $b$ values sum to zero, then they must all be zero, in which case the sequence $\seq{a_n}$ is eventually zero, and, hence, not a sequence of positive integers.  If the $b$ values sum to $1$, then all of them must be zero except for one.  So, the recurrence we obtain is $a_n=b_ia_{n-i}$ for some $i$.  So, in this case, $\seq{a_n}$ is eventually periodic.  Eventually constant sequences eventually satisfy the recurrence $\Mf\p{n}=\Mf\p{n-\Mf\p{n-1}}$, but it is unclear whether higher periods can always be realized within meta-Fibonacci sequences.
\subsection{An Example}
The following example should illustrate most of the nuances of our construction.  Consider the sequence $\seq{a_n}_{n\geq0}$ defined by $a_0=30$, $a_1=40$, $a_2=60$, and $a_n=a_{n-1}+2a_{n-3}$ for $n\geq3$.  (The large initial values allow us to avoid having an unreasonably long initial condition in our meta-Fibonacci sequence.)  The first few terms of this sequence are $\p{30, 40, 60, 120, 200, 320, 560, 960,\ldots}$.  The rotations of $\seq{a_n}$ have the same initial conditions and are given by the following recurrences:
\[\begin{array}{ll}
a^{\p{0}}_n=a^{\p{0}}_{n-1}+2a^{\p{0}}_{n-3} & \p{30, 40, 60, 120, 200, 320, 560, 960,\ldots}\\
a^{\p{1}}_n=a^{\p{1}}_{n-3}+2a^{\p{1}}_{n-2} & \p{30, 40, 60, 110, 160, 280, 430, 720,\ldots}\\
a^{\p{2}}_n=a^{\p{2}}_{n-2}+2a^{\p{2}}_{n-1} & \p{30, 40, 60, 160, 380, 920, 2220, 5360,\ldots}\\
\end{array}
\]
The construction gives the sequence $\seq{q_n}_{n\geq0}$ defined by
\[
\begin{cases}
q_{6m+2j}=a^{\p{j}}_m & 0\leq j<4\\
q_{6m+2j+1}=6\p{3-j} & 0\leq j<4
\end{cases}
\]
as eventually satisfying the meta-Fibonacci recurrence
\begin{align*}
\Mf_a\p{n}=\Mf_a\p{n-\Mf_a\p{n-1}}+\Mf_a\p{n-\Mf_a\p{n-2}}+2\Mf_a\p{n-\Mf_a\p{n-5}}.
\end{align*}
Sure enough, the initial condition
\[
\p{30,18,30,12,30,6,40,18,40,12,40,6,60,18,60,12,60,6}
\]
suffices.  The next term is
\begin{align*}
\Mf_a\p{18}&=\Mf_a\p{18-\Mf_a\p{17}}+\Mf_a\p{18-\Mf_a\p{16}}+2\Mf_a\p{18-\Mf_a\p{13}}\\
&=\Mf_a\p{18-6}+\Mf_a\p{18-60}+2\Mf_a\p{18-18}\\
&=\Mf_a\p{12}+\Mf_a\p{-42}+2\Mf_a\p{0}\\
&=60+0+2\cdot30\\
&=120\\
&=a^{\p{0}}_3,
\end{align*}
as required.  The term after this is
\begin{align*}
\Mf_a\p{19}&=\Mf_a\p{19-\Mf_a\p{18}}+\Mf_a\p{19-\Mf_a\p{17}}+2\Mf_a\p{19-\Mf_a\p{14}}\\
&=\Mf_a\p{19-120}+\Mf_a\p{19-6}+2\Mf_a\p{19-60}\\
&=\Mf_a\p{-101}+\Mf_a\p{13}+2\Mf_a\p{-41}\\
&=0+18+2\cdot0\\
&=18\\
&=6\p{3-0},
\end{align*}
as required.
The rest of the desired terms can continue to be generated this way.
\begin{bibdiv}
\begin{biblist}
\bib{con}{misc}
{
author={Conolly, B.W.},
title={Meta-Fibonacci sequences, Chapter XII in S. Vajda, Fibonacci \& Lucas Numbers, and the Golden Section},
year={1989},
publisher={Ellis Horwood Limited}
}
\bib{golomb}{misc}
{
author={Golomb, S.W.},
title={Discrete Chaos: Sequences Satisfying \quot{Strange} Recursions},
year={1991},
publisher={unpublished manuscript}
}
\bib{geb}{book}
{
 author = {Hofstadter, Douglas},
 title = {G\"odel, Escher, Bach: an Eternal Golden Braid}, 
 publisher = {Penguin Books},
 year = {1979}, 
 pages = {137}
}
\bib{rusk}{article}
{
author = {Ruskey, F.}
title = {Fibonacci Meets Hofstadter},
journal={The Fibonacci Quarterly},
volume= {49},
year = {2011},
number = {3},
pages = {227-230}
}
\bib{tanny}{article}
{
title={A well-behaved cousin of the Hofstadter sequence},
  author={Tanny, Stephen M},
  journal={Discrete Mathematics},
  volume={105},
  number={1},
  pages={227--239},
  year={1992},
  publisher={Elsevier}
}
\end{biblist}
\end{bibdiv}
\end{document}